
\documentclass[openany,reqno,a4paper,12pt]{amsart}

\usepackage{amsmath,amssymb,amsthm}

\usepackage{mathrsfs}

\usepackage{mathtools}
\usepackage{commath}

\usepackage{thmtools}
\usepackage{thm-restate}

\usepackage{cases}
\usepackage{enumitem}
\setlist[enumerate,1]{label=(\roman*)}

\usepackage[pdftex, pdfborderstyle={/S/U/W 0}]{hyperref}
\hypersetup{
    colorlinks=true,
    linkcolor=magenta,
    citecolor=cyan,
}

\usepackage{cleveref}

\usepackage{etoolbox}



\numberwithin{equation}{section}

\linespread{1.07}



\ifdefined\thmcolor
\declaretheoremstyle[
  shaded={bgcolor=\thmcolor}
]{plain}
\else
\fi

\ifdefined\defcolor
\declaretheoremstyle[
  headfont=\normalfont\bfseries,
  bodyfont=\normalfont,
  shaded={bgcolor=\defcolor}
]{noital}
\else
\declaretheoremstyle[
  headfont=\normalfont\bfseries,
  bodyfont=\normalfont,
]{noital}
\fi


\declaretheorem[style=plain,numberwithin=section,name=Theorem]{theorem}

\declaretheorem[style=plain,sibling=theorem,name=Lemma]{lemma}

\declaretheorem[style=plain,sibling=theorem,name=Claim]{claim}

\declaretheorem[style=plain,sibling=theorem,name=Question]{question}

\declaretheorem[style=plain,numbered=no,name=Theorem]{theorem-n}
\declaretheorem[style=plain,numbered=no,name=Proposition]{proposition-n}
\declaretheorem[style=plain,numbered=no,name=Lemma]{lemma-n}
\declaretheorem[style=plain,numbered=no,name=Corollary]{corollary-n}
\declaretheorem[style=plain,numbered=no,name=Conjecture]{conjecture-n}
\declaretheorem[style=plain,numbered=no,name=Claim]{claim-n}
\declaretheorem[style=plain,numbered=no,name=Fact]{fact-n}
\declaretheorem[style=plain,numbered=no,name=Open Problem]{openproblem-n}
\declaretheorem[style=plain,numbered=no,name=Question]{question-n}



\declaretheorem[style=noital,sibling=theorem,name=Definition]{definition}

\declaretheorem[style=noital,numbered=no,name=Remark]{remark-n}
\declaretheorem[style=noital,numbered=no,name=Definition]{definition-n}
\declaretheorem[style=noital,numbered=no,name=Construction]{construction-n}
\declaretheorem[style=noital,numbered=no,name=Example]{example-n}





\newcommand{\st}{\mathbin{\colon}}

\undef{\set}
\DeclarePairedDelimiter{\set}{\lbrace}{\rbrace}

\undef{\emptyset}
\newcommand{\emptyset}{\varnothing}




\newcommand{\from}{\colon}









\undef{\abs}
\DeclarePairedDelimiterX{\abs}[1]
  {\lvert}{\rvert}{\ifblank{#1}{\,\cdot\,}{#1}}

\undef{\norm}
\DeclarePairedDelimiterX{\norm}[1]
  {\lVert}{\rVert}{\ifblank{#1}{\,\cdot\,}{#1}}

\DeclarePairedDelimiterX{\inner}[2]
  {\langle}{\rangle}{\ifblank{#1}{\,\cdot\,}{#1},\ifblank{#2}{\,\cdot\,}{#2}}









\DeclareMathDelimiter{\given}
  {\mathbin}{symbols}{"6A}{largesymbols}{"0C}


\DeclareMathOperator{\Prob}{\mathbb{P}}
\DeclarePairedDelimiterXPP{\prob}[1]
  {\Prob}{\lparen}{\rparen}{}
  {\renewcommand{\given}{\nonscript\;\delimsize\vert\nonscript\;\mathopen{}}#1}

\DeclareMathOperator{\Expec}{\mathbb{E}}
\DeclarePairedDelimiterXPP{\expec}[1]
  {\Expec}{\lparen}{\rparen}{}
  {\renewcommand{\given}{\nonscript\;\delimsize\vert\nonscript\;\mathopen{}}#1}

\DeclareMathOperator{\Var}{Var}
\DeclarePairedDelimiterXPP{\var}[1]
  {\Var}{\lparen}{\rparen}{}
  {\renewcommand{\given}{\nonscript\;\delimsize\vert\nonscript\;\mathopen{}}#1}

\DeclareMathOperator{\Cov}{Cov}
\DeclarePairedDelimiterXPP{\cov}[2]
  {\Cov}{\lparen}{\rparen}{}{#1,#2}










\newcommand{\NN}{\mathbb{N}}


\newcommand{\cP}{\mathcal{P}}




\usepackage[numbers]{natbib}  

\usepackage{geometry}
\geometry{
    left=30mm,
    right=30mm,
    bottom=35mm,
}

\usepackage{comment}




\newcommand{\fim}{\mathbin{f_{k+1}}}
\newcommand{\fre}{\mathbin{f_{k}}}

\newcommand{\chrom}{\chi_\text{g}}
\newcommand{\blanks}{\chi_\text{gb}}
\newcommand{\connd}{\chi_\text{cg}}
\newcommand{\colcon}{\text{col}_\text{cg}}

\usepackage{tikz}
\usetikzlibrary {quotes} 


\begin{document}

\title{On monotonicity in Maker-Breaker graph colouring games}

\author{Lawrence Hollom}
\address{Department of Pure Mathematics and Mathematical Statistics (DPMMS), University of Cambridge, Wilberforce Road, Cambridge, CB3 0WA, United Kingdom}
\email{lh569@cam.ac.uk}



\begin{abstract}
    In the Maker-Breaker vertex colouring game, first publicised by Gardner in 1981, Maker and Breaker alternately colour vertices of a graph using a fixed palette, maintaining a proper colouring at all times.
    Maker aims to colour the whole graph, and Breaker aims to make some vertex impossible to colour.
    We are interested in the following question, first asked by Zhu in 1999: if Maker wins with $k$ colours available, must they also win with $k+1$?
    This question has remained open, attracting significant attention and being reposed for many similar games.
    While we cannot resolve this problem for the vertex colouring game, we can answer it in the affirmative for the game of arboricity, resolving a question of Bartnicki, Grytczuk, and Kierstead from 2008.

    We then consider how one might approach the question of monotonicity for the vertex colouring game, and work with a related game in which the vertices must be coloured in a prescribed order.
    We demonstrate that this `ordered vertex colouring game' does not have the above monotonicity property, and discuss the implications of this fact to the unordered game.

    Finally, we provide counterexamples to two open problems concerning a connected version of the graph colouring game.
\end{abstract}

\maketitle


\section{Introduction}
\label{sec:intro}

In this paper we are interested in Maker-Breaker graph colouring games, played on a graph $G$ between two players, Maker and Breaker, often referred to elsewhere as Alice and Bob.
The players take it in turns, starting with Maker, to assign one of a palette $X$ of colours to the vertices or edges of the graph, while obeying some restriction on the colouring (e.g. no adjacent vertices of the same colour, or no monochromatic cycles.)
Maker's goal is to colour the whole graph, and they win if this is accomplished, whereas Breaker wins if some vertex/edge becomes unplayable.

The best known example of such a game is the vertex colouring game and the associated game chromatic number.
This was first invented by Brams and then discussed by Gardner in his ``Mathematical Games'' column of Scientific American \cite{gardner1981scientific}. 
The game was then reinvented by Bodlaender \cite{bodlaender1991complexity}, who initiated the study of many properties of the game, which have since received significant attention.
The game is played on the vertices of a graph $G$, with the restriction that no two adjacent vertices may be given the same colour.
The \emph{game chromatic number} $\chrom(G)$ of $G$ is then the minimal number of colours $k$ for which Maker has a winning strategy.

The computational complexity of the vertex colouring game has also received attention.
Bodlaender \cite{bodlaender1991complexity} first asked about the complexity of the vertex colouring game in 1991, and it took almost thirty years for this problem to be resolved.
Indeed, in 2020 Costa, Pessoa, Sampaio, and Soares \cite{costa2020pspace} proved that the both the vertex colouring game and the related greedy colouring game are PSPACE-complete (see \cite{costa2020pspace} for more details).
This result has since been extended by Marcilon, Martins, and Sampaio \cite{marcilon2020hardness} to variants of the game where Breaker starts, or one or other of the players can pass their turn.

Despite the difficulty of computing $\chrom(G)$ in general, the game chromatic number of various classes of graphs has received significant attention.
In particular, the class of planar graphs and various subclasses thereof have been studied extensively \cite{faigle1991game, kierstead1994planar, zhu1999game, sekiguchi2014game, zhu2008refined, nakprasit2018game, sidorowicz2007game}, and there has also been work on random graphs in various regimes \cite{bohman2008game, keusch2014game, frieze2013game}.

Several Maker-Breaker graph colouring games besides the vertex colouring game have also been studied, for example the games corresponding to the chromatic index \cite{cai2001game}, majority colourings \cite{bosek2019majority}, and arboricity \cite{BARTNICKI20081388}.
The game chromatic number of matriods has also been studied \cite{lason2017coloring}, a generalisation of the game of arboricity that we will consider.
Another variation of the game which has been studied is the indicated chromatic number \cite{grzesik2012indicated}, wherein Maker selects a vertex and then Breaker selects the colour of that vertex.

In this paper, we are in particular interested with the following question concerning monotonicity in these games, and how the winner of the game changes with the number of colours available.

\begin{question}[{\cite[Question 1]{zhu1999game}}]
\label{question:vertex-monotone}
    If Maker wins the vertex colouring game with some number $k$ of colours, then does Maker win the vertex colouring game with $k+1$ colours?
\end{question}

This question was first posed by Zhu in 1999 \cite{zhu1999game} in the context of the vertex colouring game, where he noted that our intuition may suggest that the answer to the problem is `yes'.
Zhu also asked the following weaker version of the problem, although he noted that it may in fact not be at all easier to resolve than \Cref{question:vertex-monotone}.

\begin{question}
\label{question:weak-vertex-monotone}
    Is there a function $f$ such that $f(k)>k$, and if Maker wins the vertex colouring game with $k$ colours, then Maker also has a winning strategy with $l$ colours for any $l\geq f(k)$?
\end{question}

Despite being initially called `na\"{i}ve', the problem seems to be significantly deeper than it first appears, and has informed a number of further open problems concerning the other Maker-Breaker graph colouring games.
In particular, Bartnicki, Grytczuk, and Kierstead \cite{BARTNICKI20081388} studied the problem for the game of arboricity, played on the edges of a graph with the rule that there must be no monochromatic cycle.
They discuss the question of monotonicity, resolving an equivalent question to \Cref{question:weak-vertex-monotone} for this game, and \Cref{question:vertex-monotone} for the case of 2-degenerate graphs.

While we cannot resolve the previous two questions in their original form, we can resolve this question of monotonicity for the game of arboricity, and prove the following result.

\begin{theorem}
\label{thm:arboricity-monotone}
    If Maker wins the game of arboricity with $k$ colours, then Maker also wins with $k+1$ colours.
\end{theorem}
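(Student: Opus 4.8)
The plan is to fix a winning strategy $\sigma$ for Maker in the $k$-colour game on $G$ and use it to drive a strategy for Maker in the $(k+1)$-colour game. Maker maintains, alongside the real board $R$ (with colours $\{1,\dots,k+1\}$), an imaginary board $I$ (with colours $\{1,\dots,k\}$) on which $\sigma$ is played, and translates $\sigma$'s instructions into legal moves on $R$. The structural observation I would build everything on is that, since each colour class must be a forest, the only feature of a colour class $c$ that affects the legality of future moves is the partition of $V(G)$ into connected components of that forest: an edge may receive colour $c$ precisely when its endpoints lie in distinct parts. Equivalently, what matters is the closure of the colour class in the graphic matroid, not its edge set. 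I would therefore try to maintain the invariant that for each colour $c\in\{1,\dots,k\}$ the component partitions induced by colour $c$ agree on $R$ and on $I$, so that an edge is legally $c$-colourable on $R$ if and only if it is on $I$; the extra colour $k+1$ plays no role on $I$ and serves only as additional room on $R$.

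With this set-up the clean moves go through routinely. When Breaker colours an edge with a colour in $\{1,\dots,k\}$, Maker mirrors the move on $I$ and lets $\sigma$ reply, copying the reply back to $R$; the partition invariant makes this reply legal on $R$. When Breaker instead uses colour $k+1$, nothing happens on $I$, so from the point of view of $I$ it is as if Breaker had passed and Maker is handed a free move. I would package this as an auxiliary lemma asserting that, in the game of arboricity, a winning Maker is never harmed by being granted extra moves (equivalently, by allowing Breaker to pass). This should again follow from the matroid structure: a free move can always be spent on an edge whose colouring does not coarsen any component partition beyond what $\sigma$ would eventually force, so Maker's ability to complete the colouring is preserved.

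The hard part, where the two boards genuinely pull apart, is the interaction of these two phenomena. When Breaker plays colour $k+1$ on an edge $e$ of $R$, the edge $e$ is consumed on $R$ but remains uncoloured on $I$, and $\sigma$ may later instruct Maker to give $e$ a colour $c\le k$. On $I$ this is a legal merge of two components of colour $c$, but on $R$ the edge $e$ is no longer available to realise that merge. Here I would lean on matroid exchange: the instruction amounts to merging two specified components of the colour-$c$ forest on $R$, and provided some uncoloured edge of $G$ joins those components, Maker may reroute the response to that edge, preserving the partition invariant while spending a single move. The crux of the whole argument is to show that such a rerouting edge can always be found, or else that the intended merge has already been effected for free by an earlier $(k+1)$-coloured edge so that no move is needed. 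Controlling this bookkeeping, namely keeping the surplus of $(k+1)$-coloured edges organised so that every demand of $\sigma$ can be met directly, by rerouting, or for free, is the main obstacle, and is precisely the point at which the acyclicity (matroid) constraint, absent from the vertex-colouring game, becomes indispensable.
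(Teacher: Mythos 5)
Your proposal runs the imagination strategy in the harder direction, and the gap you yourself flag --- the ``rerouting'' bookkeeping --- is genuine, not a technicality. Concretely: suppose at some stage the two colour-$c$ components $A,B$ are joined in $G$ by exactly one uncoloured edge $e$, and Breaker colours $e$ with colour $k+1$ on the real board $R$. On the imaginary board $I$ the edge $e$ is still free, and $\sigma$ may later (perhaps must, if every other colour is blocked at $e$ on $I$) colour $e$ with $c$, merging $A$ and $B$ in the colour-$c$ partition of $I$. On $R$ no rerouting edge exists by assumption, and your fallback --- that the merge was ``effected for free'' by the earlier $(k+1)$-coloured edge --- never applies: colouring $e$ with $k+1$ coarsens only the colour-$(k+1)$ partition of $R$, never the colour-$c$ one. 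So the equality of partitions you need is irrecoverably lost; and you genuinely do need equality in your direction, since $\sigma$'s imaginary moves must be legal on $R$ (requiring that the $R$-partitions refine the $I$-partitions) while Breaker's real moves with colours $\leq k$ must be copyable to $I$ (requiring the reverse containment). Your auxiliary lemma that extra moves never harm Maker is also asserted rather than proved, and it is not innocuous here: unlike in standard positional games, in colouring games Maker's own moves constrain the board, so a forced extra colouring can hurt, and pass-variants of these games are known to behave genuinely differently (cf. the hardness results for pass variants cited in the introduction).

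The paper avoids all of this by proving the contrapositive with an imagination strategy for \emph{Breaker}: the real game has $k$ colours, the imaginary game $k+1$, and every move by either player is copied onto the \emph{same edge} of the other board, so the sets of coloured edges never diverge and no rerouting is ever needed. When Breaker's imaginary move (colour $c$ on $e$) cannot be reproduced on the real board, Breaker simply plays an arbitrary legal colour on that same edge $e$ (or wins outright if $e$ is unplayable). The invariant then needs to be only a one-way containment --- if $u,v$ lie in the same $c$-component of the imaginary colouring, $c\leq k$, then they lie in the same $c$-component of the real colouring --- which is preserved precisely because colour substitutions occur only when the intended merge has already happened on the real board, so they only ever coarsen the real partitions. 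That single one-directional claim suffices both to copy Maker's real moves into the imaginary game and to transfer Breaker's imaginary win back to the real game. If you want to salvage your write-up, the cleanest fix is to switch to this direction: your two-board set-up and your key observation that only the component partitions of each colour class matter carry over verbatim, but the unresolved crux disappears.
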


We prove \Cref{thm:arboricity-monotone} via a so-called `imagination strategy' for Breaker, and can convert a winning strategy for Breaker with $k+1$ colours to a winning strategy for Breaker with $k$ colours.
Such a method of proof was popularised by the seminal paper of Bre{\v{s}}ar, Klav{\v{z}}ar, and Rall \cite{brevsar2010domination}, who applied their techniques to the domination game on a graph.
Furthermore, we can convert a strategy from $k+1$ to $k$ colours so as to not change the location of any of Breaker's plays, only the colours played.

One might hope that a similar imagination strategy might resolve \Cref{question:vertex-monotone}, and that a strategy could be converted to a different number of colours without the need to change the location of any plays.
However, if this were the case, then we would also need the monotonicity property to hold for the vertex colouring game even when the order in which the vertices must be played is specified.
More precisely, given any total ordering $<$ on $V$, the vertex set of $G$, the \emph{ordered vertex colouring game} on $G$ subject to $<$ is the vertex colouring game wherein the order in which the vertices are coloured is prescribed by $<$ (but Maker and Breaker are still free to choose which colour they play.)
However, in \Cref{sec:ordered-vertex-colouring-game} we prove the following theorem, which shows that even the weaker \Cref{question:weak-vertex-monotone} is answered in the negative for the ordered vertex colouring game.

\begin{theorem}
\label{thm:ordered-non-monotone}
    For any integer $k$, and any $l>k$, there is a graph $G=(V,E)$ and ordering $<$ of $V$ such that Maker wins the ordered vertex colouring game with $k$ colours on $G$ subject to $<$, whereas Breaker wins the ordered vertex colouring game on $G$ subject to $<$ with $l$ colours.
\end{theorem}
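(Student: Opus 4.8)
The plan is to first strip the ordered game down to its combinatorial core. Since the order $<$ is fixed, the identity of the player who colours a given vertex is \emph{not} a choice: Maker colours the vertices in odd positions and Breaker those in even positions, so the only freedom either player has is which colour to assign, subject to properness. Consequently Breaker wins exactly when some vertex $w$ is \emph{unplayable} at the moment it is reached, i.e. its already-coloured neighbours exhaust all available colours; Maker wins iff every vertex meets its turn with a free colour. Thus I would reformulate the whole problem as a ``colour-diversity'' game with forced turn-assignment: Maker tries to keep, for every vertex, at least one colour absent from its coloured neighbourhood, while Breaker tries to saturate some neighbourhood. I would also fix turn-parities at will by padding the order with isolated dummy vertices, which are always playable and never constrain anything, so that each gadget vertex is owned by the intended player.

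Before building anything, I would record the obstruction that any naive attempt must overcome. For a single target vertex $v$ the game is monotone in the \emph{wrong} direction for us: with a larger palette the neighbourhood of $v$ is harder to saturate, and Maker can always \emph{reserve a hole} by fixing one colour $c^\ast$ and ensuring, through the vertices it controls or through earlier blocking neighbours, that $c^\ast$ never appears on $N(v)$. This ``reservation'' defence costs Maker essentially one blocked colour per target and becomes \emph{easier} as the palette grows. Hence the example cannot live at one vertex; the non-monotonicity must be a genuinely global phenomenon in which several coupled targets share structure, so that Maker cannot reserve a hole at every target simultaneously once the palette is enlarged.

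With this in mind, the construction I would attempt uses a family of coupled \emph{threshold blocking gadgets}. The basic device is a vertex $o$ equipped with exactly $k-1$ neighbours appearing earlier in the order: with a palette of size exactly $k$ the opponent of $o$'s owner can drive those $k-1$ blockers onto $k-1$ distinct colours, \emph{pinning} $o$ to the unique remaining colour, whereas with any palette of size $l>k$ at least two colours survive at $o$ and its owner keeps an \emph{escape} colour. Placing the blocker count at $k-1$ puts the threshold precisely between $k$ and $k+1$. I would then wire many such Breaker-owned outputs into a common high-degree target $v$ (of degree at least $l$) in such a way that the pins impose a \emph{global} constraint: with $k$ colours the pinned outputs are forced to overlap so heavily that Maker can maintain a hole at $v$ and, by coupling, at every target at once, giving Maker the win; with $l$ colours the simultaneous escapes can be routed so as to cover all $l$ colours at $v$, defeating every possible reservation and letting Breaker saturate. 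To obtain the statement for \emph{all} $l>k$ from a single $k$, I would include enough redundant escape routes and enough neighbours at the target that for every $l>k$ Breaker has at least $l$ independent escapes, while the extra structure is inert for Maker's $k$-colour strategy (which simply never uses colours beyond the first $k$). The two verifications required are then a clean description of Maker's coordinated $k$-colour strategy and of Breaker's saturating strategy for each $l>k$, tracking the forced parities throughout.

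The main obstacle is exactly the inversion of the single-target monotonicity, and in particular defeating Maker's reservation defence. In every naive wiring, giving Maker even one blocker per target lets Maker block a single colour $c^\ast$ everywhere and keep it as a universal hole, which restores monotonicity; and conversely any device that forces all colours onto a target's neighbourhood tends to do so already with $k$ colours, handing Breaker a premature win. The delicate part of the design is therefore to couple the targets tightly enough that reserving a hole at one target consumes blocking capacity needed at another, with the arithmetic arranged so that the capacity suffices with a palette of $k$ but fails for every larger palette, all while never accidentally producing a saturated neighbourhood at the $k$-colour level. Getting this coupling and its accompanying parity bookkeeping to work simultaneously for a fixed $k$ and all $l>k$ is where the real work of the proof lies, and it is precisely this that shows the analogue of even the weaker \Cref{question:weak-vertex-monotone} fails for the ordered game.
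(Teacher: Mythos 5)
There is a genuine gap: you have written a research plan, not a proof. The theorem is a pure existence statement, so its entire mathematical content is a concrete graph, a concrete ordering, and two verified strategies (Maker's with $k$ colours, Breaker's with $l$ colours). Your proposal supplies none of these: the construction is described only as something you ``would attempt,'' the wiring of the gadgets into a common target is left unspecified (``the pins impose a global constraint,'' ``the escapes can be routed''), and you state explicitly that arranging the coupling and parity bookkeeping ``is where the real work of the proof lies.'' Deferring exactly that work means the statement has not been proved; in particular, your own discussion of Maker's ``reservation'' defence shows that whether any given wiring defeats it is precisely the point that must be checked, and cannot be waved at.

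That said, your ingredients are the right ones, and it is worth seeing how the paper assembles them into an actual example. The paper exhibits a small graph $H_r$ (\Cref{fig:non-monotone-ordered-graph}) with one high-degree target (vertex $2r+7$). Your ``threshold blocking gadget'' appears there in exactly one place: vertex $2r+4$ is Breaker's, its two earlier neighbours get colours $1$ and $2$, so with palette size $3$ Breaker is pinned to colour $3$ there, which in turn blocks colour $3$ at the next target-neighbour $2r+6$ --- Maker forces Breaker to spend their own move blocking their own attack, which is how the ``reservation'' of colour $3$ at the target survives (\Cref{lem:ordered-maker-wins-3}). With any larger palette Breaker escapes the pin, plays fresh colours at the target's neighbours one by one, and saturates it (\Cref{lem:ordered-breaker-wins-4}). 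Note also that your padding device is too weak for the generalisation step: isolated dummy vertices only fix parity, whereas to move from the base case $k=3$ to arbitrary $k$ the paper attaches a Breaker-owned clique joined to \emph{all} of $H_{l-k}$, which burns $k-3$ colours and shrinks the effective palette --- a palette-reduction trick your proposal does not contain, and which lets the single base construction serve all $k$ without redesigning the gadget arithmetic around ``$k-1$ blockers'' as you propose.
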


Finally, we look at the connected vertex colouring game and connected game colouring number, wherein after the first move a vertex may only be played if it is adjacent to a previously-coloured vertex.
These were introduced by Charpentier, Hocquard, Sopena, and Zhu \cite{CHARPENTIER2020connected} in 2020, who proved some results and posed several problems about these games.
We follow their notation, writing $\connd(G)$ for the minimal number of colours for which Maker wins the connected vertex colouring game, and $\colcon(G)$ for the connected game colouring number of $G$.
Notably, they proved that for any $k$, there are graphs $G_k$ such that Maker wins the connected vertex colouring game with 2 colours, but Breaker wins with $k$ colours, and so this game does not have the monotonicity property corresponding to \Cref{question:weak-vertex-monotone}.

The connected vertex colouring game has received further study \cite{lima2023connected, bradshaw2023note}, but many questions remain open.
Here we provide counterexamples to two such questions.
We give a graph $G$ for which $\chrom(G)<\connd(G)$--that is, the connectivity helps Breaker--partially resolving a problem of Charpentier, Hocquard, Sopena, and Zhu \cite[Section 4, Problem 5]{CHARPENTIER2020connected}.
We also provide a graph $H$ and edge $e\in E(H)$ for which $\colcon(H-e) > \colcon(H)$, i.e. the connected game colouring number is not monotone in adding edges, unlike the usual game colouring number.
This resolves another problem of Charpentier, Hocquard, Sopena, and Zhu \cite[Section 4, Problem 4]{CHARPENTIER2020connected}.


\section{The game of arboricity}
\label{sec:game-of-arboricity}
The game of arboricity was introduced by Bartnicki, Grytczuk, and Kierstead \cite{BARTNICKI20081388} in 2008.
The game is played on the edges of a graph $G$, with neither player allowed to produce a monochromatic cycle. The set of edges given any particular colour form a forest, and so the minimal number of colours for which Maker wins is bounded below by the arboricity of $G$.
Bartnicki, Grytczuk, and Kierstead asked in \cite{BARTNICKI20081388} whether the winner of the arboricity game is monotone in the number of colours, and were able to give a positive answer in the case of 2-degenerate graphs.
The following theorem resolves this question.

\begin{theorem}
    For a fixed graph $G$, if Breaker wins the arboricity game with $k+1$ colours, then Breaker also wins on $G$ with $k$ colours.
\end{theorem}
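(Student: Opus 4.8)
The plan is to prove this Breaker-transfer statement directly (it is exactly the contrapositive of \Cref{thm:arboricity-monotone}) by an imagination strategy for Breaker. Assuming Breaker has a winning strategy $\sigma$ in the game with $k+1$ colours, I would have Breaker play the real game with the $k$ colours $\{1,\dots,k\}$ while maintaining in their head a parallel \emph{imagined} game with the $k+1$ colours $\{1,\dots,k+1\}$, in which they follow $\sigma$. The two games will always have coloured exactly the same set of edges, in the same order, differing only in the colours assigned; this is what delivers the promised feature that Breaker's plays occupy the same locations in both games.

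First I would fix the coupling. Whenever Maker colours an edge $e$ in the real game with a colour $c\le k$, Breaker copies this move verbatim into the imagined game. Whenever it is Breaker's turn, Breaker consults $\sigma$ to obtain an imagined move $(e,c')$ with $c'\in\{1,\dots,k+1\}$, plays the \emph{same} edge $e$ in both games, colours it $c'$ in the imagined game, but in the real game colours $e$ with any colour of $\{1,\dots,k\}$ not creating a monochromatic cycle. The governing invariant, maintained after every move, is the connectivity statement $(\star)$: for each colour $i\in\{1,\dots,k\}$ and all vertices $u,v$, if $u$ and $v$ lie in the same component of the imagined colour-$i$ forest then they lie in the same component of the real colour-$i$ forest. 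I would stress that this is the right notion: literal containment of the colour classes is too strong to maintain, whereas connectivity-containment is exactly what the argument needs.

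Next I would show that $(\star)$ transfers the win. If $\sigma$ forces some edge $f=xy$ to become unplayable in the imagined game, then $x$ and $y$ are joined in the imagined colour-$i$ forest for every $i\in\{1,\dots,k+1\}$; restricting to $i\le k$ and applying $(\star)$ shows that $x$ and $y$ are joined in every real colour class, so $f$ is unplayable in the real game too. Since the two games colour the same edges in the same order, the real game cannot finish before the imagined one, and $\sigma$ guarantees that the latter ends in a Breaker win; hence so does the former.

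It remains to verify that $(\star)$ is preserved, and this is where the main obstacle sits. Maker's moves are routine: the contrapositive of $(\star)$ shows that a legal real move copies to a legal imagined move, and adding the same edge to both colour-$c$ forests preserves $(\star)$ by transitivity of connectivity. The delicate case is Breaker's move, where $\sigma$ may demand either the extra colour $k+1$ (which has no real counterpart) or a colour $c'\le k$ whose real forest already joins the endpoints of $e$, so that $\sigma$'s prescription is simply illegal in the real game. The resolution is that Breaker may recolour freely: colouring $e$ with \emph{any} legal real colour preserves $(\star)$, since adding $e$ to a real forest only enlarges its components, and in the awkward subcase where the endpoints of $e$ are already joined in the real colour-$c'$ forest a short transitivity argument shows the newly merged imagined colour-$c'$ component is still matched on the real side. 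Finally, if no legal real colour for $e$ exists at all, then $e$ is already unplayable and Breaker has won outright. Thus the only genuine work is isolating this recolouring step and checking that the connectivity invariant survives it; everything else is bookkeeping.
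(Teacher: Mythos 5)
Your overall approach is exactly the paper's: an imagination strategy in which Breaker plays the same edges in both games, together with the connectivity-containment invariant $(\star)$; your treatment of Maker's moves, of the win transfer, and of the case where the prescribed colour is unplayable in the real game all match the paper's argument. However, there is one genuine flaw, and it sits in the definition of the coupling rather than in the verification. You define Breaker's real move as ``any colour of $\{1,\dots,k\}$ not creating a monochromatic cycle,'' with no requirement to copy $\sigma$'s colour $c'$ when doing so is possible. With this rule, $(\star)$ can fail: suppose $\sigma$ prescribes $(e,c')$ with $c'\le k$, the endpoints $x,y$ of $e$ lie in \emph{different} real colour-$c'$ components (so playing $c'$ is legal in the real game), and Breaker instead plays some other legal colour $c''\neq c'$. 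Then $x$ and $y$ become joined in the imagined colour-$c'$ forest but remain separated in the real one, violating $(\star)$. Your claim that ``colouring $e$ with any legal real colour preserves $(\star)$'' is true only in the delicate cases you isolate (prescription $k+1$, or prescription $c'\le k$ with $x,y$ already joined in the real $c'$-forest); your verification silently skips the third case, where the prescription is legal in the real game, and there free recolouring breaks the invariant.

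This is not merely cosmetic: once $(\star)$ fails, your ``routine'' step for Maker's moves fails with it. Maker may later make a real play of colour $c'$ joining the two real $c'$-components that $(\star)$ no longer controls; that play can be illegal in the imagined game (it closes an imagined $c'$-cycle through $e$), so Breaker cannot keep the imagined game consistent and the scheme collapses --- and unlike the paper, your write-up has no provision for an uncopyable Maker move. The repair is small and is precisely the paper's strategy: Breaker must play $c'$ in the real game whenever $c'\le k$ and doing so is legal, and only recolours arbitrarily (or declares victory) when it is not. With that one clause added to the coupling, your induction covers all cases and the rest of your argument goes through verbatim.
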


\begin{proof}
    As we build Breaker's strategy, we will produce a colouring of $G$ with $k$ colours, represented by the partial function $\fre\from E\to [k]$.
    Breaker will use the following imagination strategy.
    Breaker keeps track of an imaginary version of the game, played with $k+1$ colours on the same graph $G$, and producing partial colouring $\fim\from E\to [k+1]$ (so $\fre$ and $\fim$ will be updated as the game progresses).
    Whenever Maker makes some play in the real game, updating $\fre$, Breaker copies Maker's move over to the imaginary game, updating $\fim$, maintaining the colour if possible. 
    If it is not possible, then Breaker concedes defeat and loses, which we will in due course prove never happens.
    Breaker then uses their winning strategy for $k+1$ colours to decide on some play; say that they set $\fim(e)=c$.
    This is converted to a move in the real game as follows.
    \begin{itemize}
        \item If $c\leq k$, and setting $\fre(e)=c$ is a valid move, then Breaker does this.
        \item If $\fre(e)$ cannot be set to any colour without creating a cycle, then $e$ is unplayable, so Breaker has won.
        \item Otherwise, Breaker sets $\fre(e)$ to an arbitrary legal colour.
    \end{itemize}

    First note that the set of edges played is identical between the imagined and real games.
    Therefore it suffices to prove that Maker's move can always be copied over to the imagined game while preserving the colour, and that if Breaker wins the imagined game, then they also win the real game.

    We will say that two vertices $u,v$ are \emph{in the same $c$-component} of colouring $f$ if they are in the same connected component of $G$ restricted to only edges given colour $c$ by the colouring $f$.
    Note that this induces an equivalence relation on the vertices for each colour $c$.
    We now state and prove a claim concerning these $c$-components.
    \begin{claim}
    \label{claim:components}
        If $1\leq c\leq k$, and at some stage of the game $u,v$ are in the same $c$-component of $\fim$, then they are in the same $c$-component of $\fre$.
    \end{claim}
    \begin{proof}
        We proceed by induction.
        Firstly, note that the claim is certainly true before any edges have been coloured, so assume now that the claim holds after some edges have been coloured in both games.
        We consider colouring a further edge $e=xy$, and show the claim still holds.
        
        If the same colour is played in both games, then the result follows immediately, so it suffices to consider only Breaker's turn.
        If Breaker sets $\fim(e) = k+1$, then there is nothing to prove, as the $c$-components of $\fim$ do not change for any $c\leq k$, and the $c$-components of $\fre$ do not shrink.
        
        Thus the only condition left to consider is the case where Breaker plays $\fim(e) = c\leq k$ in the imagined game, and the corresponding play in the real game is $\fre(e)=c' \neq c$.
        This can only occur if setting $\fre(e) = c$ was not a valid play, i.e. $x$ and $y$ are already in the same $c$-component of $\fre$.
        But then the only change to $c$-components of $\fim$ resulting from Breaker's play is to put $x$ and $y$ in the same $c$-component, and then the new $c$-component of $\fim$ of these points is contained in the $c$-component of $\fre$ of the points, so the claim holds.

        This completes our proof of the inductive step, and so the claim follows.
    \end{proof}

    Now, if a colour $c$ is played by Maker in the real game on an edge $e=xy$, then $x,y$ are not in the same $c$-component in the real game, and so by \Cref{claim:components} are not in the same $c$-component in the imagined game either, so Maker's move can be copied to the imagined game, as required.

    Finally, if Breaker wins in the imagined game, then there is an uncoloured edge $e=xy$ where $x$ and $y$ are in the same $c$-component in the imagined game for every $c\leq k+1$. Thus by \Cref{claim:components}, they are in the same $c$-component in the real game for every $c\leq k$, and so $e$ cannot be coloured in the real game, and so Breaker wins there too, as required.
\end{proof}


\section{The ordered vertex colouring game}
\label{sec:ordered-vertex-colouring-game}

As discussed in the introduction, it is natural to try and employ an imagination strategy to prove the monotonicity of the vertex colouring game, much like in the previous section.
One would consider a winning strategy for Breaker using $k+1$ colours, and from this construct a winning strategy for Breaker with $k$ colours.
The simplest method one could hope to use to achieve this would be to adjust the colour of Breaker's plays, but not the location. 
Indeed, as we know nothing about $G$, modifying a strategy by changing the location of plays seems very difficult to analyse.

If such a proof method were to work, then we would know that the monotonicity property was preserved even when the order in which the vertices must be played is prescribed.
However, we now prove \Cref{thm:ordered-non-monotone}, showing that there need not be monotonicity in such conditions.

We first give a precise definition of the ordered vertex colouring game.
\begin{definition}
\label{def:ordered-vertex-colouring-game}
    The \emph{ordered vertex colouring game} is played on a finite simple graph $G=(V,E)$ with some fixed linear ordering $<$ of $V$, with some fixed finite palette $X$ of colours.
    Then Maker and Breaker alternately colour vertices of $V$ with colours from $X$, colouring the vertices in the increasing order given by $<$.
    Both players may only give vertex $v$ colour $c$ if no neighbour of $v$ already has colour $c$.
    If at some point this is impossible to achieve, then Breaker wins. 
    Otherwise, the whole graph will eventually be coloured, in which case Maker wins.
\end{definition}

\begin{figure}[ht]
    \centering
    \begin{tikzpicture}[scale=2,darkstyle/.style={rectangle,draw,fill=gray!40,minimum size=10}]
        \path[-] 
            (3.5,3) edge (1,2)
            (3.5,3) edge (2,2)
            (3.5,3) edge (3,2)
            (3.5,3) edge (4,2)
            (3.5,3) edge (5,2)
            (3.5,3) edge (6,2)
            (1,2) edge (1,1)
            (2,2) edge (2,1)
            (3,2) edge (3,1)
            (4,2) edge (4,1)
            (4,2) edge (5,2)
            (5,2) edge (6,2)
            (1,2) edge [bend left=30] (3.5,3.5)
            (3.5,3.5) edge [bend left=30] (6,2)
            (2,1) edge [bend right=80] (6,2)
            (3,1) edge [bend right=60] (6,2);
         \node [darkstyle] (1) at (3.5,3) {1};
         \node [darkstyle] (2) at (1,2) {2};
         \node [darkstyle] (3) at (2,2) {3};
         \node [darkstyle] (4) at (2,1) {4};
         \node at (2.4,2) {\ldots};
         \node at (2.4,1) {\ldots};
         \node [darkstyle] (3r) at (3,2) {$2r+1$};
         \node [darkstyle] (4r) at (3,1) {$2r+2$};
         \node [darkstyle] (5) at (4,1) {$2r+3$};
         \node [darkstyle] (6) at (4,2) {$2r+4$};
         \node [darkstyle] (7) at (1,1) {$2r+5$};
         \node [darkstyle] (8) at (5,2) {$2r+6$};
         \node [darkstyle] (9) at (6,2) {$2r+7$};
    \end{tikzpicture}
    \caption{The graph $H_r$, which exhibits the non-monotonicity of the ordered vertex colouring game.}
    \label{fig:non-monotone-ordered-graph}
\end{figure}

We now state and prove two lemmas, which together immediately allow us to conclude \Cref{thm:ordered-non-monotone} in the case where $k=3$.
We can then make a slight modification to our constructed graph to deduce \Cref{thm:ordered-non-monotone} in full.
Throughout the rest of this section, $H_r$ will be the graph on $2r+7$ vertices shown in \Cref{fig:non-monotone-ordered-graph}, with its ordering induced from the labels of the vertices.

\begin{lemma}
\label{lem:ordered-maker-wins-3}
    Maker wins the ordered vertex colouring game on $H_r$ with 3 colours.
\end{lemma}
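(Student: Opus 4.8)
The plan is to exhibit an explicit adaptive strategy for Maker, organised around a single critical vertex. The key structural observation is that, among all vertices, only vertex $2r+7$ could possibly have three differently-coloured neighbours at the moment it is played: vertices $2r+4$ and $2r+6$ each have only two earlier neighbours, every other vertex has total degree at most two, and the high-degree hub vertex $1$ is coloured first and so can never be blocked. Maker should therefore reserve one colour to keep $2r+7$ playable, while merely ensuring that no vertex ever gets stuck along the way.

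First I would fix notation. Since Maker moves first and the vertices are coloured in the order of their labels, Maker colours the odd-indexed vertices and Breaker the even-indexed ones. Maker opens by colouring vertex $1$ with colour $1$; Breaker must then colour vertex $2$ with some colour $\beta\in\{2,3\}$, and I let $\gamma$ denote the remaining colour. Maker now commits to finishing the game by playing $\gamma$ on vertex $2r+7$, which succeeds provided no neighbour of $2r+7$ ever receives colour $\gamma$.

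The heart of the argument is the treatment of the $r$ paired vertices $(2i+1,2i+2)$. On each top vertex $2i+1$ (a Maker move) Maker plays $\gamma$; this is legal because its only coloured neighbour is vertex $1$, of colour $1\neq\gamma$. Consequently, when Breaker colours the bottom vertex $2i+2$, whose only coloured neighbour is $2i+1$, Breaker is forbidden colour $\gamma$, so every bottom vertex---each a neighbour of $2r+7$---avoids $\gamma$. For the endgame, Maker colours $2r+3$ with $\beta$ (legal, as its only neighbour $2r+4$ is still uncoloured); this forces Breaker's move on $2r+4$ to be $\gamma$ (the only colour distinct from $1$ and $\beta$), which in turn forces Breaker's move on $2r+6$ to be $\beta$ (the only colour distinct from $1$ and $\gamma$). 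Vertex $2r+5$ is coloured freely. At this point every neighbour of $2r+7$---namely vertex $1$, vertex $2$, vertex $2r+6$, and all bottom vertices---has a colour in $\{1,\beta\}$, so Maker completes the colouring with $2r+7=\gamma$.

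The one point requiring genuine care---and the main obstacle---is that in the ordered game Breaker also wins if one of \emph{Breaker's own} vertices becomes unplayable, so Maker must guarantee that every vertex, not only $2r+7$, admits a legal colour when its turn arrives. I would verify this vertex by vertex: vertices $2$, $2r+5$, and each bottom vertex $2i+2$ have at most one coloured neighbour when played, while $2r+4$ and $2r+6$ have exactly two coloured neighbours and hence one forced legal colour, which is precisely the colour the plan predicts. Since Breaker's only real freedom is the choice of $\beta$ (to which Maker's choice of $\gamma$ adapts) and the colours of the bottom vertices (all necessarily $\neq\gamma$), no move of Breaker's can derail the strategy, and Maker colours the whole graph.
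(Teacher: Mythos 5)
Your proof is correct and takes essentially the same approach as the paper: reserve one colour (your $\gamma$, the paper's colour~3) for vertex $2r+7$, play it on the top vertices $3,5,\dotsc,2r+1$ so Breaker cannot use it on the bottom row, then play $\beta$ at $2r+3$ to force Breaker's moves at $2r+4$ and $2r+6$, exactly as in the paper (where the colours are fixed wlog rather than adaptively named). One cosmetic slip: vertex $2$ actually has degree $3$ (it is adjacent to $1$, $2r+5$, and $2r+7$), not ``total degree at most two,'' but the relevant quantity is the number of \emph{earlier} neighbours, and your later vertex-by-vertex verification of coloured neighbours handles this correctly, so nothing in the argument is affected.
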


\begin{lemma}
\label{lem:ordered-breaker-wins-4}
    Breaker wins the ordered vertex colouring game on $H_r$ with $3+r$ colours.
\end{lemma}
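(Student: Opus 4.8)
The plan is to have Breaker disregard every vertex except the last one, $2r+7$, and win by making it unplayable: Breaker will arrange that by the time the game reaches $2r+7$ its neighbours already carry all $3+r$ colours, leaving no legal colour for it. Since the vertices are coloured in the order $1,2,\dots,2r+7$ and Maker moves first, Maker colours exactly the odd-indexed vertices and Breaker the even-indexed ones; in particular $2r+7$ is coloured last, by Maker. Reading the neighbourhood of $2r+7$ off \Cref{fig:non-monotone-ordered-graph}, it consists of the three vertices $1,2,2r+6$ together with the $r$ lower vertices $4,6,\dots,2r+2$, a total of $r+3$ vertices (this is the origin of the $3+r$). Of these, only vertex $1$ is coloured by Maker, while the other $r+2$ neighbours are all even-indexed and hence coloured by Breaker. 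Thus it suffices for Breaker to force these $r+2$ vertices to receive, between them, the $r+2$ colours other than the colour $a$ that Maker assigns to vertex $1$; then all $3+r$ colours appear on the neighbourhood of $2r+7$ and Breaker wins.

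The constraints Breaker must respect are mild. Each lower vertex $2i+2$ must avoid the colour of its partner $2i+1$, which is its unique already-coloured neighbour at that moment and which Maker colours one step earlier; vertex $2$ need only avoid $a$; and vertex $2r+6$ must avoid $a$ and the colour of $2r+4$. I would have Breaker build the rainbow greedily: at each of its moves on a neighbour of $2r+7$, it introduces a colour not yet used on that neighbourhood and different from $a$. When colouring a lower vertex $2i+2$, exactly one colour is forbidden (that of $2i+1$), and a direct count shows that the number of still-unused colours different from $a$ is at least two at every lower vertex, so a fresh colour always remains available after excluding the forbidden one. After the last lower vertex there is exactly one colour left unused, and this is placed on $2r+6$.

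The main point to verify carefully is the endgame around the gadget $\{2r+3,2r+4,2r+6\}$: I must check that the single remaining colour can legally be placed on $2r+6$. Because $2r+4$ is itself coloured by Breaker, and at move $2r+4$ the identity of the missing colour is already determined (vertex $2$ and all lower vertices are coloured by move $2r+2$), Breaker can colour $2r+4$ so as not to block this final colour, which is possible as there are $3+r\geq 4$ colours available. I also need to confirm that Maker cannot interfere: Maker's only relevant choices are the colour $a$ of vertex $1$ and the colours of the upper vertices $2i+1$, each of which merely forbids one colour on the corresponding $2i+2$, while Maker's remaining vertices $2r+3$, $2r+5$, and $2r+7$ are irrelevant to the neighbourhood of $2r+7$. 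Finally, since Breaker wins the instant any vertex becomes unplayable, Breaker never risks losing by being unable to move, so it remains only to confirm that the greedy colouring is always legal, which follows from the same count. I expect the delicate step to be the colouring of the final lower vertex, where the slack in available colours drops to exactly one, together with the bookkeeping at the gadget that reserves the last colour for $2r+6$.
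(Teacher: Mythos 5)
Your proposal is correct and follows essentially the same strategy as the paper's proof: Breaker greedily places fresh colours on the Breaker-controlled neighbours of vertex $2r+7$ (the counting argument $r+2-i\geq 2$ unused colours versus one forbidden colour at each lower vertex matches the paper's), then colours $2r+4$ to avoid the one remaining colour $c$ so that $c$ can legally be placed on $2r+6$, making $2r+7$ unplayable. The only cosmetic difference is that the paper normalises the colours of vertices $1$ and $2$ to be $1$ and $2$ at the outset, while you carry the colour $a$ of vertex $1$ symbolically.
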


Before proving \Cref{lem:ordered-maker-wins-3} and \Cref{lem:ordered-breaker-wins-4}, we first give a brief discussion of the intuition behind why this non-monotonicity occurs.

When there are $k\geq 3$ colours available, the only vertex which may ever become impossible to colour in $H_r$ is vertex $2r+7$, as all other vertices have at most two edges to vertices appearing earlier in the order.
As vertex 1 is coloured first, the only way in which Breaker could win is if vertex $2r+7$ becomes impossible to colour.

When there are $k=3$ colours, Maker may use their plays at vertices $3,5,\dotsc,2r+1$ and $2r+3$ to control what Breaker can do at vertices $4,6,\dotsc,2r+2,2r+4,$ and $2r+6$.
As there are few colours, Maker can exert significant control over Breaker this way.

However, when there are $k\geq 4$ colours, Maker has much less control over Breaker; for example, Maker cannot use their play at vertex $2r+3$ to control precisely what Breaker must play at vertex $2r+6$. 
Because of this extra freedom, Breaker can force vertex $2r+7$ to become unplayable, and thus win.

We now present the above arguments formally.

\begin{proof}[Proof of \Cref{lem:ordered-maker-wins-3}]
    Say the palette is $\set{1,2,3}$.
    We may wlog assume that Maker plays colour 1 at vertex 1, and Breaker then plays colour 2 at vertex 2.
    Already the only colour playable at vertex $2r+7$ is colour 3, so Maker will win if (and only if) they can prevent Breaker from playing colour 3 at any of vertices $4,6,\dotsc,2r+2$, and $2r+6$.

    Maker can achieve this by first playing colour 3 at vertex 3, as then Breaker cannot colour vertex 4 with colour 3.
    Maker then repeats this strategy, giving all vertices $5,\dotsc,2r+1$ colour 3.
    Maker then assigns colour 2 to vertex $2r+3$, as then Breaker must give vertex $2r+4$ colour 3, preventing them from colouring vertex $2r+6$ with colour 3.
    
    Thus colour 3 will remain available for vertex $2r+7$, and so Maker wins.
\end{proof}

\begin{proof}[Proof of \Cref{lem:ordered-breaker-wins-4}]
    Say that the palette is $\set{1,2,3,\dotsc,3+r}$.
    As in the previous proof, we may wlog assume that vertex 1 takes colour 1 and vertex 2 takes colour 2. 
    To win, Breaker must give vertices $4,6,\dotsc,2r+2$, and $2r+6$ colours $3,\dotsc,3+r$ in some order.

    Firstly, whatever Maker plays at each of the vertices $3,5,\dotsc,2r+1$, they can only rule out one colour from being played at each of $4,6,\dotsc,2r+2$.
    Therefore at each of these even-numbered vertices, Breaker may play some colour not yet adjacent to vertex $2r+7$, reducing the number of colours available at vertex $2r+7$ by one for each play they make.
    Thus once vertex $2r+2$ has been coloured, there will only be one colour playable at vertex $2r+7$.
    Call this one remaining colour $c$.
    
    However Maker colours vertex $2r+3$, Breaker will have at least two choices for colouring vertex $2r+4$, and so in particular may give vertex $2r+4$ some colour $d\neq c$.
    Maker's play at vertex $2r+5$ is irrelevant, and so Breaker may then give vertex $2r+6$ colour $c$, making vertex $2r+7$ unplayable, and winning.
\end{proof}

We now combine the above two lemmas to deduce \Cref{thm:ordered-non-monotone}.

\begin{proof}[Proof of \Cref{thm:ordered-non-monotone}]
    First note that \Cref{lem:ordered-maker-wins-3} and \Cref{lem:ordered-breaker-wins-4} show that the graph $H_{l-3}$ is exactly as we require when $k=3$, namely that Maker wins with 3 colours and Breaker wins with $l$.
    We let the vertex set of $H_{l-k}$ be $\set{v_1,\dotsc,v_n}$, where $n=2(l-k)+7$ and the $v_i$ is the vertex with label $i$ in \Cref{fig:non-monotone-ordered-graph}.
    If $k> 3$, then we define $G$ to be the graph on vertex set $\set{u_1,\dotsc,u_{2(k-3)},v_1,\dotsc,v_n}$, where the vertices $\set{v_1,\dotsc,v_n}$ induce a copy of $H_{l-k}$, and the remaining vertices are given edges as follows.
    For each $1\leq i\leq k-3$, vertex $u_{2i-1}$ is an isolated vertex, and $u_{2i}$ has edges to all $u_{2j}$ for all $1\leq j\leq k-3$ with $i\neq j$.
    We also add edges between $u_{2i}$ and $v_h$ for all $1\leq h\leq n$.
    The vertices of $G$ are ordered so that $u_1<u_2<\dots<u_{2(k-3)}<v_1<v_2<\dots<v_n$.

    As the game progresses, Maker's plays at the odd-indexed vertices in the set $U=\set{u_1,u_2,\dotsc,u_{2(k-3)}}$ are irrelevant, as those vertices are independent.
    Each time Breaker plays at an even-indexed vertex in $U$, they must use a new colour, as $\set{u_{2i}\st 1\leq i\leq k-3}$ induces a clique.
    Thus if $c$ colours were available at the start of the game, then by the time Maker plays at vertex $v_1$ there will be a palette of exactly $c-(k-3)$ colours remaining, and the same palette will be available at every remaining vertex.
    Thus the winner of the game will be the same as when playing on $H_{l-k}$ with $c-(k-3)$ colours.
    Thus we see that if $c=k$ then Maker wins by \Cref{lem:ordered-maker-wins-3}, whereas if $c=l$ then Breaker wins by \Cref{lem:ordered-breaker-wins-4}, as required.
\end{proof}

We also note that the graph $H_1$ has implications for the following question of Havet and Zhu \cite[Problem 5]{havet2013game}.

\begin{question}
\label{question:grundy-chromatic-comparison}
    Is it true that $\Gamma_g(G)\leq \chrom(G)$ for all graphs $G$?
\end{question}

Here, $\Gamma_g(G)$ is the \emph{game Grundy number} of $G$, introduced in \cite{havet2013game}, the minimal number of colours for which Maker wins the greedy colouring game.
In this game, Maker and Breaker alternately colour vertices of $G$, but now, instead of being able to pick the colour played, the palette $X$ of colours is linearly ordered and the players must play the minimal legal colour at every step.

As in the above discussion of monotonicity of the vertex colouring game, one can consider the \emph{ordered greedy colouring game} wherein an order of the vertices is prescribed.
This `game' is degenerate, as no choices remain for either player.
One might hope a proof of \Cref{question:grundy-chromatic-comparison} could be found by modifying a winning strategy from one game to also win the other, without changing the location of any plays. 
But this would imply that if Maker wins the ordered vertex colouring game with some (ordered) palette, then Maker would also win the ordered greedy colouring game on the same ordered graph and palette.
This does not hold however, as a simple check shows that Breaker wins the ordered greedy colouring game on the ordered graph $H_1$ with three colours.
Thus any resolution of \Cref{question:grundy-chromatic-comparison} requires a proof technique more subtle than that in \Cref{sec:game-of-arboricity}.


\section{The connected game chromatic number}
\label{sec:connected}

Charpentier, Hocquard, Sopena, and Zhu \cite{CHARPENTIER2020connected} asked five problems about the connected game chromatic number and connected game colouring number.
The first of their problems, concerning the connected game colouring number of outerplanar graphs, has been resolved by Bradshaw \cite{bradshaw2023note}, but the other four remained open.
We now prove \Cref{thm:chromatic-less-than-connected} and \Cref{thm:connected-not-edge-monot} two results, resolving the fourth and partially resolving the fifth problems of \cite{CHARPENTIER2020connected}.

We resolve these two problems by producing a counterexamples to each result.
Both of these examples were discovered via computer search, and the proofs that they have the claimed properties are via tedious case analysis, which is not presented here.
Instead, we give only brief intuitive hints as to why the properties are as we claim.

\begin{theorem}
\label{thm:chromatic-less-than-connected}
    There exists a graph $G$ for which $\chrom(G)<\connd(G)$.
\end{theorem}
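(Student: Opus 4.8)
The goal is to exhibit a graph $G$ where Breaker is genuinely helped by the connectivity constraint, so that $\connd(G) > \chrom(G)$. The plan is to find a graph where the connectivity requirement forces Maker to play in a region of the graph before they would otherwise wish to, thereby surrendering a tempo that Breaker can exploit. The essential idea is that in the unordered vertex colouring game, Maker has the freedom to play anywhere, and can use this freedom to pre-emptively defend a vulnerable vertex. The connectivity constraint removes this freedom: Maker can only play adjacent to the already-coloured region, so Breaker may be able to ``trap'' Maker into growing the coloured component along a path that leaves some later vertex undefended.

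Concretely, I would look for a graph built from a small ``trap gadget'' -- a low-degree vertex $w$ whose colour can be blocked if its neighbours are coloured adversarially -- attached to the rest of the graph by a ``bridge'' so that the only way to reach $w$ under the connectivity rule is through a controlled sequence of vertices. In the free game, Maker can colour $w$ (or one of its neighbours) early and neutralise the threat, giving $\chrom(G) = k$ for some small $k$. In the connected game, Breaker can steer the growth of the coloured component so that by the time play is forced near $w$, Maker can no longer prevent all of $w$'s neighbours from receiving distinct colours, forcing $\connd(G) > k$. The graph would be designed so that the parities of the forced play sequence favour Breaker in the connected setting.

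Having fixed a candidate $G$, the proof naturally splits into two independent parts, each an explicit strategy argument. First I would show $\chrom(G) \le k$ by describing a concrete winning strategy for Maker in the ordinary vertex colouring game with $k$ colours, checking that whatever Breaker does, every vertex (in particular the trap vertex $w$) retains a legal colour; because Maker plays first and unconstrained, this should reduce to a short case analysis keyed to Breaker's first few moves. Second I would show $\connd(G) > k$ by exhibiting a winning strategy for Breaker in the connected game with $k$ colours: Breaker maintains an invariant controlling which colours appear on the boundary of the coloured component, and a potential/parity argument guarantees that the trap vertex is reached with all $k$ colours already present among its neighbours.

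The main obstacle is that these strategy verifications do not have a clean closed form and amount, as the excerpt itself concedes, to a tedious case analysis over the opponent's responses; moreover the graph must be delicately tuned so that the extra freedom afforded by the unconstrained game is exactly what saves Maker, while the connectivity rule removes precisely that freedom. Rather than present the full casework, the honest approach -- and the one I would adopt here -- is to exhibit the explicit counterexample graph (found by computer search), state the two bounds as the two halves of the argument, and give the intuitive reason (the loss of a defensive tempo under the connectivity constraint) for why the connected game is strictly harder for Maker, deferring the exhaustive verification.
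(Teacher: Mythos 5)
Your proposal correctly identifies the shape of the argument --- exhibit an explicit counterexample and prove two bounds, Maker winning the free game with $k$ colours and Breaker winning the connected game with the same $k$ colours --- and your tempo/reachability intuition matches what actually happens in the paper's example. But as written there is a genuine gap: you never produce the graph. The theorem is a pure existence statement, so the witness is the entire mathematical content; a description of the kind of gadget one would search for, together with a promise that a computer search would find such a graph and that the two strategy verifications could then be carried out, is a research plan rather than a proof. Nothing in your text pins down a specific graph, a specific value of $k$, or a specific strategy for either player, so there is no step that could even in principle be checked. The paper, by contrast, commits to a concrete witness: the $7$-vertex graph of \Cref{fig:chromatic_less_than_connected}, for which it asserts $\chrom(G)=4$ and $\connd(G)=5$. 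It too defers the exhaustive case analysis, but its intuitive hint is anchored in that specific graph: with four colours, Maker's only winning opening moves in the free game are at vertices 1 or 2; after Breaker replies at the other of these two, Maker's unique winning continuation is to play vertex 3 --- a move the connectivity rule forbids, since vertex 3 is adjacent to neither vertex 1 nor vertex 2.

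For what it is worth, your gadget picture is close to how the real example works: in the paper's graph the ``unreachable'' vertex 3 has degree two and is not itself the vertex that gets blocked; rather, colouring it early is Maker's indispensable defensive move protecting an adjacent high-degree vertex, which is exactly your ``colour $w$ or one of its neighbours early'' scenario, and the paper's observation that once vertex 1 is coloured every vertex except 3 is playable is precisely your point that connectivity removes only this one defensive option. So the plan points in the right direction, but to turn it into a proof you must do what the search would have done: fix the graph, fix $k$, and then either present the case analysis or at least state the two concrete claims (Maker wins the free game with $4$ colours; Breaker wins the connected game with $4$ colours) about a named graph so that they can be verified.
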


The reason that connectivity helps Breaker in the graph shown in \Cref{fig:chromatic_less_than_connected} is roughly the following.
In the non-connected version of the game, Maker's only opening moves to win with 4 colours are to play at either vertex 1 or 2 (without loss of generality we may assume it is with colour 1).
If Breaker then replies with colour 2 at whichever of vertices 1 and 2 is available, then Maker's only winning move is to play at vertex 3, which is not possible in the connected version of the game.
Note further that once vertex 1 is played, all vertices other than 3 are playable, so we can easily translate between connected and non-connected versions of the game.

\begin{figure}[ht]
    \centering
    \begin{tikzpicture}[scale=2,darkstyle/.style={circle,draw,fill=gray!40,minimum size=10}]
        \path[-] 
            (2,1) edge (1,1)
            (2,1) edge (2,2)
            (2,1) edge (0.5,0.5)
            (2,1) edge (1,2)
            (2,1) edge (2.5,0.5)
            (1.5,0) edge (0.5,0.5)
            (1.5,0) edge (2.5,0.5)
            (1,1) edge (1,2)
            (2,2) edge (2.5,0.5)
            (2,2) edge (1,2)
            (2.5,0.5) edge (0.5,0.5)
            (1,2) edge (0.5,0.5)
            (0.5,0.5) edge [bend left=45] (0.5,2)
            (0.5,2) edge [bend left=45] (2,2);
         \node [darkstyle] (0) at (2,1) {1};
         \node [darkstyle] (1) at (1.5,0) {3};
         \node [darkstyle] (2) at (1,1) {};
         \node [darkstyle] (3) at (2,2) {2};
         \node [darkstyle] (4) at (2.5,0.5) {};
         \node [darkstyle] (5) at (1,2) {};
         \node [darkstyle] (6) at (0.5,0.5) {};
    \end{tikzpicture}
    \caption{A graph $G$ for which $\chrom(G)<\connd(G)$. Here, $\chrom(G)=4$ and $\connd(G)=5$.}
    \label{fig:chromatic_less_than_connected}
\end{figure}

\begin{theorem}
\label{thm:connected-not-edge-monot}
    There exists a graph $G$ with an edge $e\in E(G)$ for which $\colcon(G-e)>\colcon(G)$.
\end{theorem}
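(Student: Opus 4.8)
The plan is to exhibit an explicit pair $(G,e)$ and to verify the two strict inequalities separately, by bounding $\colcon(G)$ from above and $\colcon(G-e)$ from below. Recall that $\colcon$ is the value of the connected marking game: Maker and Breaker alternately mark vertices (Maker first), every vertex after the first being required to be adjacent to the already-marked set; the score is one plus the largest number of already-marked neighbours any vertex has at the moment it is marked, and $\colcon(G)$ is the minimax value of this score when Maker minimises and Breaker maximises. So it suffices to produce a threshold $t$, together with a Maker strategy on $G$ guaranteeing score at most $t$ and a Breaker strategy on $G-e$ forcing score at least $t+1$.

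The guiding intuition---and the reason such a pair can exist at all, in contrast to the ordinary game colouring number---is that adding an edge has two opposing effects in the connected game. On one hand it raises back-degrees, which is the only effect present in the unconstrained marking game and which is precisely what makes that parameter monotone under edge addition. On the other hand, a new edge enlarges the set of vertices adjacent to the current marked region, and hence the set of legal moves; in the connected game this can hand Maker an extra ``access route'' to a critical vertex, letting Maker mark it before Breaker has surrounded it. I would therefore look for a graph containing a vertex (or small set of vertices) that is dangerous in the sense that, under the connectivity constraint of $G-e$, every route reaching it must first pass through plays that let Breaker accumulate many marked neighbours around it, whereas the single extra edge $e$ of $G$ opens a shortcut along which Maker can neutralise it, at the cost of only the one unit of back-degree that $e$ itself contributes.

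Concretely, I would search by computer over small graphs and candidate edges, computing $\colcon$ of each graph by backward induction over the finite set of connected marked configurations (each tagged with whose turn it is), and retaining any pair with $\colcon(G-e)>\colcon(G)$. Given such a pair, the proof reduces to two finite strategy verifications. For the upper bound I would describe Maker's strategy on $G$ explicitly, exploiting the shortcut edge $e$ to reach the dangerous vertex early, and check that against every Breaker reply the maximum back-degree stays at most $t-1$; here the connectivity constraint is actually an aid, since it sharply limits the moves available to Breaker at each stage. For the lower bound I would give Breaker's strategy on $G-e$ and verify that, whatever Maker does, some vertex is forced to have back-degree at least $t$ at the moment it is marked.

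The main obstacle is exactly this pair of strategy verifications. Because the state of the connected game records the entire (connected) marked set together with whose turn it is, the game tree, though finite, is large, and a human-readable proof would require careful pruning of the case analysis---isolating the few genuinely distinct Breaker (respectively Maker) responses using the symmetries of the graph and showing that the remaining branches are dominated. As this analysis is routine but lengthy, I would, as the authors do, present the explicit graph together with the key strategic idea and defer the exhaustive case-checking to computer verification.
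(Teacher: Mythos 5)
Your plan coincides, step for step, with what the paper actually does: the authors found their pair $(G,e)$ by computer search, assert the values $\colcon(G)=3$ and $\colcon(G-e)=4$, explain the mechanism, and explicitly decline to print the case analysis, calling it tedious. Even your intuition is the paper's intuition: in their example (\Cref{fig:connected_not_edge_monot}), keeping the score at $3$ forces Maker to open at the vertex labelled $1$; if Breaker then marks vertex $2$, Maker's unique good reply is vertex $3$, and vertex $3$ is adjacent to the marked region only through the edge $e$ --- precisely your ``access route'' picture of why an extra edge can help Maker under the connectivity constraint, outweighing the one unit of back-degree it contributes.

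That said, as written your proposal does not yet prove the theorem, and the gap is not a formality. The statement is purely existential, so the witness graph \emph{is} the mathematical content; ``I would search by computer over small graphs and retain any pair with $\colcon(G-e)>\colcon(G)$'' is a research plan whose success is exactly what is in question. Note that for the ordinary (unconstrained) marking game the analogous search provably returns nothing, since the usual game colouring number is monotone under edge addition; your second paragraph explains why the connected game \emph{could} behave differently, but an explanation of possibility is not an instance. To complete the argument you must exhibit a concrete graph and edge --- for instance the paper's $8$-vertex example --- and then either present the two strategy verifications (Maker keeps the score at $3$ on $G$; Breaker forces $4$ on $G-e$) or state that they were machine-checked. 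Until the object is on the page, nothing has been established.
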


The reason the existence of edge $e$ in the graph shown in \Cref{fig:connected_not_edge_monot} helps Maker is roughly as follows.
In order for no vertex to be marked with more than two already-marked neighbours, Maker must start by marking the vertex labelled 1.
Then, if Breaker marks vertex 2, Maker's only winning reply is to mark vertex 3, which is only possible when the edge $e$ is present.

\begin{figure}[ht]
    \centering
    \begin{tikzpicture}[scale=2,darkstyle/.style={circle,draw,fill=gray!40,minimum size=10}]
        \path[-] 
            (2,3) edge (1,4)
            (2,3) edge (3,3)
            (3,3) edge (4,4)
            (3,3) edge (1,2)
            (4.5,3) edge (4,4)
            (4.5,3) edge (4,2)
            (2.5,1.5) edge (1,2)
            (2.5,1.5) edge [draw=red, dashed, "$e$"] (4,2)
            (4,4) edge (1,4)
            (4,4) edge (4,2)
            (1,4) edge (1,2)
            (1,2) edge (4,2);
         \node [darkstyle] (0) at (2,3) {};
         \node [darkstyle] (1) at (3,3) {};
         \node [darkstyle] (2) at (4.5,3) {2};
         \node [darkstyle] (3) at (2.5,1.5) {3};
         \node [darkstyle] (4) at (4,4) {};
         \node [darkstyle] (5) at (1,4) {};
         \node [darkstyle] (6) at (1,2) {};
         \node [darkstyle] (7) at (4,2) {1};
    \end{tikzpicture}
    \caption{A graph $G$ and edge $e$ (marked in dashed red) for which $\colcon(G-e)>\colcon(G)$. Indeed, $\colcon(G)=3$ and $\colcon(G-e)=4$.}
    \label{fig:connected_not_edge_monot}
\end{figure}


\section{Future work and open problems}
\label{sec:future}

There are many problems left open in the field of graph colouring games.
Of the problems which have been discussed in this paper, the most prominent is \Cref{question:vertex-monotone} and its equivalent versions for other games.
In light of the results presented in \Cref{sec:ordered-vertex-colouring-game}, it is unclear how to approach this problem, or indeed what the answer might be.
We also note that \Cref{question:grundy-chromatic-comparison} is still open.

As discussed in \Cref{sec:intro}, similar questions of monotonicity have been asked of several other graph properties, but very few solutions have been found.
The connected version of the graph colouring game of Charpentier, Hocquard, Sopena, and Zhu \cite{CHARPENTIER2020connected}, studied in \Cref{sec:connected}, does not have this property of monotonicity, and examples were given in \cite{CHARPENTIER2020connected} of, given an integer $k$, a graph where Maker wins with 2 colours, but Breaker wins with with any number of colours between 3 and $k$ (inclusive).
These examples also demonstrate that $\chrom(G)$ cannot be bounded from above by a function of $\connd(G)$.
Conversely, we have shown that $\connd(G)$ is not bounded above by $\chrom(G)$, but the following -- and perhaps more difficult -- question remains open.

\begin{question}
    Is there a function $f\from\NN\to\NN$ such that $\chrom(G)\leq f(\connd(G))$ for any connected graph $G$?
\end{question}

To the best of the author's knowledge, the result presented in \Cref{sec:game-of-arboricity} is the first nontrivial resolution in the affirmative to a problem of monotonicity of the form of \Cref{question:vertex-monotone}.
Despite the apparent difficulty, we believe that \Cref{question:hereditary} is interesting generalisation of these specific problems.
We now give the necessary definitions and then state this question.

Firstly, define a \emph{hereditary property of vertex (edge) coloured graphs}, to be a class of vertex (edge, respectively) coloured graphs, which is closed under taking subgraphs.
Two examples of such a property would be `is properly vertex-coloured', or `is edge-coloured without monochromatic cycles'.

For any hereditary property of vertex (edge) coloured graphs $\cP$, one may consider the Maker-Breaker graph colouring game with respect to $\cP$, wherein players colour vertices (edges, respectively) in turn, with the restriction that property $\cP$ must always hold.

\begin{question}
\label{question:hereditary}
    For which hereditary properties $\cP$ of vertex (edge) coloured graphs is it the case that if Maker wins the colouring game with respect to $\cP$ with $k$ colours, then Maker also wins with $k+1$ colours?
\end{question}

As there are a vast number of different hereditary properties of coloured graphs, an answer to \Cref{question:hereditary} in the form stated seems out of reach.
However, we believe that it would be interesting to investigate whether there are large classes of properties for which monotonicity could be proved or disproved, as this could potentially shed light on what the relevant characteristics of $\cP$ are for having/not having monotonicity.

One may of course also consider the weaker version of the above problem wherein we only ask that Maker wins with some number $f(k)$ of colours, but it is not clear whether this version of the problem is any more approachable than the one given above.

While we have made several observations about what structure proofs of such a statement for various specific $\cP$ would have to take, there still seems to be significant work required before a resolution to a question such as the above could be made.


\section{Declaration of Competing Interests}
The author declares that they have no known competing financial interests or personal relationships that could have appeared to influence the work reported in this paper.


\section{Acknowledgements}
\label{sec:acknowledgement}

The author is supported by the Trinity Internal Graduate Studentship of Trinity College, Cambridge.
The author would like to thank both their supervisor, Professor B\'{e}la Bollob\'{a}s, and the anonymous referee, for their thorough readings of the manuscript and many valuable comments.


\bibliographystyle{abbrvnat}  
\renewcommand{\bibname}{Bibliography}
\bibliography{main}

\begin{thebibliography}{24}
\providecommand{\natexlab}[1]{#1}
\providecommand{\url}[1]{\texttt{#1}}
\expandafter\ifx\csname urlstyle\endcsname\relax
  \providecommand{\doi}[1]{doi: #1}\else
  \providecommand{\doi}{doi: \begingroup \urlstyle{rm}\Url}\fi

\bibitem[Bartnicki et~al.(2008)Bartnicki, Grytczuk, and
  Kierstead]{BARTNICKI20081388}
T.~Bartnicki, J.~Grytczuk, and H.~Kierstead.
\newblock The game of arboricity.
\newblock \emph{Discrete Mathematics}, 308\penalty0 (8):\penalty0 1388--1393,
  2008.
\newblock ISSN 0012-365X.
\newblock \doi{https://doi.org/10.1016/j.disc.2007.07.066}.
\newblock Third European Conference on Combinatorics.

\bibitem[Bodlaender(1991)]{bodlaender1991complexity}
H.~L. Bodlaender.
\newblock On the complexity of some coloring games.
\newblock \emph{International Journal of Foundations of Computer Science},
  2\penalty0 (02):\penalty0 133--147, 1991.

\bibitem[Bohman et~al.(2008)Bohman, Frieze, and Sudakov]{bohman2008game}
T.~Bohman, A.~Frieze, and B.~Sudakov.
\newblock The game chromatic number of random graphs.
\newblock \emph{Random Structures \& Algorithms}, 32\penalty0 (2):\penalty0
  223--235, 2008.

\bibitem[Bosek et~al.(2019)Bosek, Grytczuk, and
  Jak{\'o}bczak]{bosek2019majority}
B.~Bosek, J.~Grytczuk, and G.~Jak{\'o}bczak.
\newblock Majority coloring game.
\newblock \emph{Discrete Applied Mathematics}, 255:\penalty0 15--20, 2019.

\bibitem[Bradshaw(2023)]{bradshaw2023note}
P.~Bradshaw.
\newblock A note on the connected game coloring number.
\newblock \emph{Discrete Applied Mathematics}, 333:\penalty0 13--19, 2023.

\bibitem[Bre{\v{s}}ar et~al.(2010)Bre{\v{s}}ar, Klav{\v{z}}ar, and
  Rall]{brevsar2010domination}
B.~Bre{\v{s}}ar, S.~Klav{\v{z}}ar, and D.~F. Rall.
\newblock Domination game and an imagination strategy.
\newblock \emph{SIAM Journal on Discrete Mathematics}, 24\penalty0
  (3):\penalty0 979--991, 2010.

\bibitem[Cai and Zhu(2001)]{cai2001game}
L.~Cai and X.~Zhu.
\newblock Game chromatic index of k-degenerate graphs.
\newblock \emph{Journal of Graph Theory}, 36\penalty0 (3):\penalty0 144--155,
  2001.

\bibitem[Charpentier et~al.(2020)Charpentier, Hocquard, Éric Sopena, and
  Zhu]{CHARPENTIER2020connected}
C.~Charpentier, H.~Hocquard, Éric Sopena, and X.~Zhu.
\newblock A connected version of the graph coloring game.
\newblock \emph{Discrete Applied Mathematics}, 283:\penalty0 744--750, 2020.
\newblock ISSN 0166-218X.
\newblock \doi{https://doi.org/10.1016/j.dam.2020.03.012}.

\bibitem[Costa et~al.(2020)Costa, Pessoa, Sampaio, and Soares]{costa2020pspace}
E.~Costa, V.~L. Pessoa, R.~Sampaio, and R.~Soares.
\newblock Pspace-completeness of two graph coloring games.
\newblock \emph{Theoretical Computer Science}, 824-825:\penalty0 36--45, 2020.
\newblock ISSN 0304-3975.
\newblock \doi{https://doi.org/10.1016/j.tcs.2020.03.022}.

\bibitem[Faigle et~al.(1991)Faigle, Kern, Kierstead, and
  Trotter]{faigle1991game}
U.~Faigle, W.~Kern, H.~A. Kierstead, and W.~T. Trotter.
\newblock On the game chromatic number of some classes of graphs.
\newblock \emph{Ars Combinatoria}, 35:\penalty0 143--150, 1991.

\bibitem[Frieze et~al.(2013)Frieze, Haber, and Lavrov]{frieze2013game}
A.~Frieze, S.~Haber, and M.~Lavrov.
\newblock On the game chromatic number of sparse random graphs.
\newblock \emph{SIAM Journal on Discrete Mathematics}, 27\penalty0
  (2):\penalty0 768--790, 2013.

\bibitem[Gardner(1981)]{gardner1981scientific}
M.~Gardner.
\newblock Mathematical games.
\newblock \emph{Scientific American}, 244:\penalty0 18--26, 1981.

\bibitem[Grzesik(2012)]{grzesik2012indicated}
A.~Grzesik.
\newblock Indicated coloring of graphs.
\newblock \emph{Discrete Mathematics}, 312\penalty0 (23):\penalty0 3467--3472,
  2012.

\bibitem[Havet and Zhu(2013)]{havet2013game}
F.~Havet and X.~Zhu.
\newblock The game grundy number of graphs.
\newblock \emph{Journal of Combinatorial Optimization}, 25\penalty0
  (4):\penalty0 752--765, 2013.

\bibitem[Keusch and Steger(2014)]{keusch2014game}
R.~Keusch and A.~Steger.
\newblock The game chromatic number of dense random graphs.
\newblock \emph{Electron. J. Comb.}, 21:\penalty0 4, 2014.

\bibitem[Kierstead and Trotter(1994)]{kierstead1994planar}
H.~A. Kierstead and W.~T. Trotter.
\newblock Planar graph coloring with an uncooperative partner.
\newblock \emph{Journal of Graph Theory}, 18\penalty0 (6):\penalty0 569--584,
  1994.

\bibitem[Laso{\'n}(2017)]{lason2017coloring}
M.~Laso{\'n}.
\newblock The coloring game on matroids.
\newblock \emph{Discrete mathematics}, 340\penalty0 (4):\penalty0 796--799,
  2017.

\bibitem[Lima et~al.(2023)Lima, Marcilon, Martins, and
  Sampaio]{lima2023connected}
C.~V. Lima, T.~Marcilon, N.~Martins, and R.~Sampaio.
\newblock The connected greedy coloring game.
\newblock \emph{Theoretical Computer Science}, 940:\penalty0 1--13, 2023.

\bibitem[Marcilon et~al.(2020)Marcilon, Martins, and
  Sampaio]{marcilon2020hardness}
T.~Marcilon, N.~Martins, and R.~Sampaio.
\newblock Hardness of variants of the graph coloring game.
\newblock In Y.~Kohayakawa and F.~K. Miyazawa, editors, \emph{LATIN 2020:
  Theoretical Informatics: 14th Latin American Symposium, S{\~a}o Paulo,
  Brazil, January 5-8, 2021, Proceedings}, pages 348--359. Springer, 2020.

\bibitem[Nakprasit and Nakprasit(2018)]{nakprasit2018game}
K.~M. Nakprasit and K.~Nakprasit.
\newblock The game coloring number of planar graphs with a specific girth.
\newblock \emph{Graphs and Combinatorics}, 34:\penalty0 349--354, 2018.

\bibitem[Sekiguchi(2014)]{sekiguchi2014game}
Y.~Sekiguchi.
\newblock The game coloring number of planar graphs with a given girth.
\newblock \emph{Discrete Mathematics}, 330:\penalty0 11--16, 2014.

\bibitem[Sidorowicz(2007)]{sidorowicz2007game}
E.~Sidorowicz.
\newblock The game chromatic number and the game colouring number of cactuses.
\newblock \emph{Information Processing Letters}, 102\penalty0 (4):\penalty0
  147--151, 2007.

\bibitem[Zhu(1999)]{zhu1999game}
X.~Zhu.
\newblock The game coloring number of planar graphs.
\newblock \emph{Journal of Combinatorial Theory, Series B}, 75\penalty0
  (2):\penalty0 245--258, 1999.

\bibitem[Zhu(2008)]{zhu2008refined}
X.~Zhu.
\newblock Refined activation strategy for the marking game.
\newblock \emph{Journal of Combinatorial Theory, Series B}, 98\penalty0
  (1):\penalty0 1--18, 2008.

\end{thebibliography}


\end{document}